\newtheorem{theorem}{Theorem}[section]          
\newtheorem{lemma}[theorem]{Lemma}             
\newtheorem{corollary}[theorem]{Corollary}         
\newcommand{\bN}{\mathbf{N}}          
\newcommand{\bL}{\mathbf{L}}          
\newcommand{\bW}{\mathbf{W}}    
\newcommand{\bS}{\mathbf{S}}    
\newcommand{\bU}{\mathbf{U}}    
\newcommand{\bw}{\alpha}    
\newcommand{\bv}{\beta}    
\newcommand{\blue}[1] {\textcolor{blue}{#1}}
\newcommand{\red}[1] {\textcolor{red}{#1}}
\newcommand{\defo}[1] {\emph{\textcolor{blue}{#1}}}
\newcommand{\lyn}{\mathrm{lyn}}
\begin{document}

\title[A Pascal-like Bound for the Number of Necklaces]{A Pascal-like
Bound for the Number of Necklaces with Fixed Density}
\author{I.~Heckenberger}
\address{Philipps Universit\"at Marburg, Fachbereich Mathematik und Informatik,
Hans-Meerwein-Str., 35032 Marburg, GERMANY}
\email{heckenberger@mathematik.uni-marburg.de}
\author{J.~Sawada}
\address{School of Computer Science, University of Guelph, Canada}
\email{jsawada@uoguelph.ca}

\maketitle

\begin{abstract}
A bound resembling Pascal's identity is presented for binary necklaces with fixed density using Lyndon words with fixed density.   The result is generalized
to $k$-ary necklaces and Lyndon words with fixed content.
The bound arises in the study of Nichols algebras of diagonal type.
\end{abstract}

\section{Introduction}

A \defo{necklace} is the lexicographically smallest word in an equivalence class of words under rotation.  A \defo{Lyndon word} is a necklace that is strictly smaller than any of its non-trivial rotations.
The \defo{density} of a binary word is the number of 1s it contains.  Let $\bN(n,d)$ denote the set of all binary necklaces of length $n$ and density $d$.  Similarly, let  $\bL(n,d)$ denote the set of all binary Lyndon words of length $n$ and density $d$.  Let the cardinality of these two sets be denoted by $N(n,d)$ and $L(n,d)$, respectively.  
The following formulae are well-known for any $n \geq 1$ and $0 \leq d \leq n$  (see \cite{MR0142480} and \cite[Sect.\,2]{MR1744685}):
\begin{align} \label{eq:N2L2}
  N(n,d)&=\frac{1}{n} \sum_{j\mid \gcd(n,d)}\phi(j){n/j \choose d/j},&
  L(n,d)&=\frac{1}{n} \sum_{j\mid \gcd(n,d)}\mu(j){n/j \choose d/j},  \nonumber
\end{align}
where $\phi $ and $\mu $ denote Euler's totient function and the M\"obius
function, respectively.
Note that $N(n,d) = N(n,n{-}d)$ and $L(n,d) = L(n,n{-}d)$.
Our main result is to prove the following Pascal's identity-like bound on 
$N(n,d)$.

\begin{theorem} \label{th:Pascal}
For any $0 <  d < n$, 
\[ N(n,d)  \leq L(n{-}1,d) + L(n{-}1,d{-}1). \]
\end{theorem}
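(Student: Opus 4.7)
The plan is to exhibit an explicit injection
\[
\Phi \colon \bN(n,d) \;\hookrightarrow\; \bL(n{-}1,d) \sqcup \bL(n{-}1,d{-}1),
\]
from which the stated inequality follows by comparing cardinalities.

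I would first normalize each $w \in \bN(n,d)$ to its lex-smallest rotation $w = a_1 a_2 \cdots a_n$. Because $0 < d < n$, the word $w$ contains both letters, so $a_1 = 0$; and if $a_n$ were $0$ then rotating that $0$ cyclically to the front would produce a lex-smaller rotation, contradicting minimality, so $a_n = 1$. The image $\Phi(w)$ is to be constructed by deleting a single carefully chosen letter $a_i$ of $w$ (or of a suitable rotation of $w$): removing a $0$ places $\Phi(w)$ in $\bL(n{-}1, d)$, while removing a $1$ places it in $\bL(n{-}1, d{-}1)$.

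The choice of $i$ depends on the structural type of $w$. When $w$ is aperiodic---itself a Lyndon word, which admits a unique standard factorization $w = uv$ with $u, v$ Lyndon and $u < v$---the natural candidates are to delete the last letter of $u$ or the first letter of $v$; a case analysis driven by this factorization identifies which choice produces a Lyndon word of length $n{-}1$. When $w = \ell^k$ is periodic with $k \ge 2$ and $\ell \in \bL(n/k, d/k)$, I would instead define $\Phi(w) = \ell^{k-1}\cdot \ell[2\!:\!n/k]$, the result of deleting the leading $0$ of the last copy of the period. The Lyndon property of $\ell$, combined with the fact that $\ell$ is lex-smaller than each of its nontrivial cyclic rotations (which forces $\ell < \ell[2\!:\!n/k]$), can be leveraged to show that this concatenation is both aperiodic---any nontrivial period would have to be a multiple of $|\ell|$, which is then ruled out by a direct comparison at the junction between the last full period and the truncated one---and lex-minimal among its rotations, hence an element of $\bL(n{-}1, d)$.

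Injectivity of $\Phi$ would be established by exhibiting a recovery procedure: from each $\ell$ in the image, its standard factorization identifies whether the preimage $w$ was aperiodic or periodic and pinpoints the unique letter to reinsert. The main obstacle, as small examples reveal, is that the two sub-maps can compete for the same Lyndon target---for instance, both the aperiodic $00110101$ and the periodic $(0011)^2$ naturally want to map to $0011011$. Resolving such collisions (probably by giving priority to the periodic sub-map and perturbing the aperiodic rule via a secondary criterion on the standard factorization) is the delicate combinatorial heart of the argument.
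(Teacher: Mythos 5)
Your high-level strategy --- inject $\bN(n,d)$ into $\bL(n{-}1,d)\sqcup\bL(n{-}1,d{-}1)$ by deleting one letter from each necklace --- is indeed the shape of the paper's argument, but the proposal stops short of the actual mathematical content. Two essential pieces are missing. First, the map is never defined on aperiodic necklaces: ``a case analysis driven by this factorization identifies which choice produces a Lyndon word'' is a placeholder, and it is not clear that deleting the last letter of $u$ or the first letter of $v$ in the standard factorization always yields a Lyndon word, nor which to pick when both do (e.g.\ for $00101$ both deletions give Lyndon words). Second, you explicitly concede that injectivity fails as stated --- your periodic and aperiodic sub-maps can both emit words of density $d$ and collide, as in your $0011011$ example --- and you defer the repair to an unspecified ``secondary criterion.'' Since that collision resolution is, in your own words, the delicate combinatorial heart of the argument, the proof is not complete.

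The paper sidesteps exactly this difficulty by choosing a different dichotomy. Instead of aperiodic versus periodic, it splits $\bN(n,d)$ into \emph{stable} necklaces (those whose length-$(n{-}1)$ prefix is a Lyndon word) and \emph{unstable} ones. A stable necklace maps to $\bL(n{-}1,d{-}1)$ by deleting its final letter (necessarily a $1$), and Corollary~\ref{cor:neck} makes this a bijection. An unstable necklace maps to $\bL(n{-}1,d)$ by deleting a $0$ chosen via the decomposition $\bw'=(a_1\cdots a_p)^j a_1\cdots a_i$ of Corollary~\ref{cor:pre} (with a reversal of a short block in one subcase). Because one sub-map always removes a $1$ and the other always removes a $0$, the two images lie in sets of different content and cannot collide; the only thing left to prove is injectivity of the unstable map, which is Lemma~\ref{lem:U}. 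Note also that stability and aperiodicity do not coincide ($00101$ is aperiodic yet unstable, $(0011)^2$ is periodic and unstable), so your partition cannot simply be swapped in. To turn your sketch into a proof you would need to either adopt the stable/unstable split, or genuinely construct and verify a collision-free rule for the aperiodic case --- neither of which is done here.
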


Bounds on necklaces and Lyndon words like the one presented in the above theorem are generally difficult to prove directly from their enumeration formulae.  Previous bounds on these objects used in algorithm analysis~\cite{MR1744685} use a  combinatorial-style proof, and that is the approach we follow in this paper.
To prove this theorem, we actually show something stronger.  Let $\Sigma = \{0,1,2, \ldots , k-1 \}$ denote an alphabet of size $k$.
Let $\bW_k(n_0,n_1, \ldots, n_{k-1})$ denote the set of all words over $\Sigma$ where each symbol $i$ appears precisely $n_i$ times.  
Such a set is said to be a set with \defo{fixed content}, as used by
\cite[Sect.\,18.3.3]{Arndt}.  
In a similar manner let $\bN_k(n_0,n_1, \ldots, n_{k-1})$ denote the set of necklaces with the given fixed content and
let $\bL_k(n_0,n_1, \ldots, n_{k-1})$ denote the set of Lyndon words with the given fixed content.  
Let the cardinality of these two sets be denoted by $N_k(n_0,n_1, \ldots, n_{k-1})$ and $L_k(n_0,n_1, \ldots, n_{k-1})$, respectively.
In \cite{MR0142480} and \cite[Sect.\,2]{MR1744685}, explicit formulas for the number of necklaces
and Lyndon words with fixed content are given:
\begin{align}
  N_k(n_0,n_1,\dots,n_{k-1})&=
\frac 1n\sum_{j\mid \gcd (n_0,\dots,n_{k-1})}\phi(j)
\frac{(n/j)!}{(n_0/j)!\cdots (n_{k-1}/j)!},  \nonumber  \\
  L_k(n_0,n_1,\dots,n_{k-1})&=
\frac 1n\sum_{j\mid \gcd (n_0,\dots,n_{k-1})}\mu(j)
\frac{(n/j)!}{(n_0/j)!\cdots (n_{k-1}/j)!},   \nonumber
\end{align}
where $n=n_0+n_1+\cdots +n_{k-1}$.  In Section~\ref{sec:proofs}, we prove the following more general result.

\begin{theorem} \label{th:ineq}
 Let $k\ge 2$ and $n_0,n_1,\dots,n_{k-1}\ge 1$ be positive integers. Then
\begin{align} \label{eq:ineq}
  N_k(n_0,\dots,n_{k-1})\le \sum_{i=0}^{k-1} L_k(n_0,\ldots,n_{i-1},
  n_i-1,n_{i+1},\ldots,n_{k-1}).  \nonumber
\end{align}
Moreover, the inequality is strict if $k > 2$.
\end{theorem}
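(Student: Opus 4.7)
My plan is to prove the inequality by constructing an explicit injective map
$$
\phi: \bN_k(n_0,\dots,n_{k-1}) \hookrightarrow \bigsqcup_{i=0}^{k-1} \bL_k(n_0,\dots,n_i-1,\dots,n_{k-1}),
$$
whose existence immediately yields the bound; for strictness when $k>2$, I would separately exhibit at least one Lyndon word in the codomain that is not in $\phi$'s image. This is in keeping with the combinatorial paradigm the authors cite after Theorem~\ref{th:Pascal}.

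The natural idea is to send a necklace $w = w_1 \cdots w_n$ to a Lyndon word of length $n-1$ obtained by a canonical single-letter deletion, with the image landing in the summand indexed by $i = w_p$ (the letter removed). Inspecting the set
$$
S(w) := \{\, p \in \{1,\dots,n\} : w_1\cdots w_{p-1}w_{p+1}\cdots w_n \text{ is a Lyndon word}\,\}
$$
for small necklaces confirms that $S(w) \neq \emptyset$, but also shows that naive rules such as ``leftmost $p$'' or ``rightmost $p$'' fail to be injective in general: already in $\bN_2(4,3)$ the primitive necklaces $0010011$ and $0010101$ both admit the Lyndon deletion $001011$. So the canonical rule must encode more structural information about $w$, plausibly via the primitive Lyndon root $u$ in the representation $w=u^r$ together with the standard factorization $u = u_1 u_2$ of $u$ as a concatenation of two Lyndon words.

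The proof then reduces to two main tasks. \emph{Existence}: for every necklace $w$, some deletion yields a Lyndon word, i.e.\ $S(w) \neq \emptyset$. One would argue by cases on whether $w$ is primitive ($r=1$) or a proper power ($r \ge 2$), using the Lyndon factorization of $u$ to pinpoint a deletion that breaks the periodicity of $u^r$ into Lyndon form. \emph{Injectivity}: one picks $\phi(w)$ as a specific element of $S(w)$ determined by the Lyndon-factorization data, and shows that the inverse procedure --- inserting the missing letter $i$ at a canonical position of $\ell$ --- is well-defined on the image. The hardest step will be injectivity, because distinct necklaces can share a common Lyndon deletion, so the canonical rule has to disentangle them using finer combinatorial data than mere position.

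For strictness when $k>2$, the task is to exhibit at least one Lyndon word in the codomain outside $\phi$'s image. A clean witness already appears at $k=3$ with $(n_0,n_1,n_2)=(1,1,1)$: here $\bN_3(1,1,1) = \{012, 021\}$ has only two elements, whereas the codomain $\bL_3(0,1,1) \sqcup \bL_3(1,0,1) \sqcup \bL_3(1,1,0) = \{12\}\sqcup\{02\}\sqcup\{01\}$ has three, so any injection must omit a Lyndon word. The general case $k>2$ should follow by a similar pigeonhole / structural argument, leveraging the fact that a larger alphabet produces strictly more Lyndon words of length $n-1$ across the $k$ content classes than can be matched by necklace preimages of content $(n_0,\dots,n_{k-1})$.
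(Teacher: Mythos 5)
Your overall strategy --- an explicit injection from $\bN_k(n_0,\dots,n_{k-1})$ into the disjoint union of the $k$ Lyndon-word classes, plus an explicit non-image witness for strictness --- is exactly the paper's strategy. But the proposal stops at the point where the actual work begins: you never define the map $\phi$, never prove that every necklace admits a Lyndon deletion, and explicitly defer the injectivity argument, which you yourself identify as the hardest step. As written this is a research plan, not a proof. For comparison, the paper resolves these issues by splitting $\bN_k$ into \emph{stable} necklaces (those $a_1\cdots a_n$ with $a_1\cdots a_{n-1}$ already Lyndon), which biject onto $\bigsqcup_{i=1}^{k-1}\bL_k(\dots,n_i-1,\dots)$ simply by dropping the last letter, and \emph{unstable} necklaces, which are injected into $\bL_k(n_0-1,n_1,\dots,n_{k-1})$ by a map $f$ built from the Fundamental Theorem of Necklaces (writing the length-$(n-1)$ prefix as $(a_1\cdots a_p)^j a_1\cdots a_i$, deleting a designated $0$, and --- crucially --- \emph{reversing and relocating a block} of letters). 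Note that this $f$ is not a pure single-letter deletion; the paper needs the rearrangement to make both well-definedness (image is Lyndon, via Corollaries~\ref{cor:lyn} and~\ref{cor:neck}) and injectivity go through. Your own counterexample ($0010011$ and $0010101$ sharing the deletion $001011$) already signals that a deletion-only rule may not exist, so committing to one is a risk the proposal does not resolve.

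The strictness argument also has a genuine gap. The single instance $(n_0,n_1,n_2)=(1,1,1)$ establishes nothing about general content vectors, and the suggested ``similar pigeonhole'' argument is circular: showing the codomain is strictly larger than the domain \emph{is} the statement to be proven, and cardinality comparisons of necklace versus Lyndon counts are exactly what the authors say is hard to extract from the enumeration formulae. The paper instead exhibits, for each $k>2$, a concrete Lyndon word in $\bL_k(n_0-1,n_1,\dots,n_{k-1})$ that cannot be in the image of $f$, with four separate constructions depending on whether $n_0=1$, $n_0=2$, $n_0$ is odd $\ge 3$, or $n_0$ is even $\ge 4$. Some case analysis of this kind (tied to whatever injection you ultimately define) is unavoidable, and your proposal does not supply it.
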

Observe that when $k=2$,  Theorem ~\ref{th:ineq} simplifies to Theorem~\ref{th:Pascal}.  After presenting some preliminary materials in Section~\ref{sec:back}, we prove the theorems in
Section~\ref{sec:proofs}.

\subsection{An Application}

	The inequalities in Theorems~\ref{th:Pascal} and \ref{th:ineq}
	seem to be new. They arised with the study
	of Nichols algebras of diagonal type in \cite{HZ}
	in order to determine whether such a
	Nichols algebra is a free algebra. Roughly, the inequality implies that a
	certain rational function is in fact a polynomial, and freeness of the Nichols
	algebra holds if none of these polynomials vanish at the point
	of an affine space determined by the braiding of the Nichols algebra.
        The calculation of the zeros of such a polynomial simplifies significantly if
	the inequality is known to be strict.  Strictness when $k=2$ is further discussed in Section~\ref{sec:special}.

\section{Background}  \label{sec:back}

A word is called a \defo{prenecklace}, if it is the prefix of some necklace.
For any non-empty word $\bw$ let $\lyn(\bw)$ be the length of the longest prefix
of $\bw$ that is a Lyndon word.
\begin{theorem} (Fundamental theorem of necklaces) \cite[Thm.\,2.1]{MR1788836} \label{thm:fund}
	\label{th:ftn}
  Let $n\ge 2$,
	$k\ge 2$, and let $\bw =a_1\cdots a_{n-1}$ be a prenecklace over the alphabet $\Sigma = \{0,1, \ldots , k-1\}$.
  Let $p=\lyn(\bw)$ and let $b \in \Sigma$. Then $\bw b$ is a prenecklace if and only if $a_{n-p}\le b < k$.
  In this case,
  $$ \lyn(\bw b)=\begin{cases}
    p & \text{if $b=a_{n-p}$,}\\
    n & \text{if $a_{n-p}<b<k$.}
  \end{cases}
  $$
\end{theorem}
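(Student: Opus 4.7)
The plan is to exploit the canonical decomposition of a prenecklace. First I would establish the structural lemma that if $\bw = a_1 \cdots a_{n-1}$ is a prenecklace with $\lyn(\bw) = p$ and $u = a_1 \cdots a_p$, then $\bw = u^q v$, where $q \geq 1$ and $v = a_1 \cdots a_r$ is a (possibly empty) proper prefix of $u$ with $n - 1 = qp + r$ and $0 \leq r < p$. Indeed, once the Lyndon prefix $u$ is fixed, any extension must periodically repeat $u$: otherwise the first position where the extension deviates from $u$ would either produce a longer Lyndon prefix (forbidden by maximality of $p$) or a rotation violating the prenecklace condition. This decomposition immediately gives $a_{n-p} = a_{r+1}$ by periodicity.

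For the forward direction, suppose for contradiction that $\bw b$ is a prenecklace but $b < a_{n-p} = a_{r+1}$. Then $\bw b w'$ is a necklace for some suffix $w'$, and the rotation of this word that starts at position $qp + 1$ begins with $v b$, which agrees with $a_1 \cdots a_r$ in its first $r$ letters and then has $b < a_{r+1}$ in position $r+1$. Hence this rotation is strictly less than $\bw b w'$ in lexicographic order, contradicting the necklace property. This shows that $a_{n-p} \leq b$ is necessary.

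For sufficiency together with the value of $\lyn(\bw b)$, I would split into two cases. If $b = a_{n-p} = a_{r+1}$, then appending $b$ extends $v$ to a longer prefix $v'$ of $u$ (of length $r+1$), so $\bw b = u^q v'$, collapsing to $u^{q+1}$ when $r+1 = p$. In either situation $\bw b$ is a prefix of the infinite periodic word $u^\infty$, which is a necklace, so $\bw b$ is a prenecklace; and since $u^t$ is non-Lyndon for $t \geq 2$, no longer Lyndon prefix can appear, forcing $\lyn(\bw b) = p$. If instead $a_{n-p} < b < k$, I would show that $\bw b$ is itself a Lyndon word, which gives $\lyn(\bw b) = n$ and in particular that it is a prenecklace. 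For this I verify strict inequality $\bw b < \rho$ for every non-trivial rotation $\rho$: rotations starting at a block boundary $jp+1$ have the form $u^{q-j} v b \, u^j$, and comparing with $u^q v b$ the first mismatch occurs at position $(q-j)p + r + 1$, where $b > a_{r+1}$ supplies the strict inequality in the correct direction; rotations starting inside a $u$-block are handled by reducing the initial comparison to a comparison between $u$ and one of its non-trivial rotations, where the Lyndon property of $u$ yields strict inequality before the tail involving $b$ ever matters.

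The main obstacle I anticipate is the last sub-case: ruling out every interior rotation requires carefully packaging the comparison so that the Lyndon-ness of $u$ is invoked on matching-length segments, and keeping track of where the appended letter $b$ sits relative to the first position of disagreement. Once this bookkeeping is in place the theorem follows, and the two formulas for $\lyn(\bw b)$ fall out of the case analysis above.
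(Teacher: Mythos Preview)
The paper does not prove this theorem; it is quoted from \cite[Thm.\,2.1]{MR1788836} and used as a black box, with Corollaries~\ref{cor:lyn}--\ref{cor:pre} read off from it. There is thus no in-paper argument against which to compare your proposal.

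Your outline is nonetheless the standard route to this result and is essentially the argument given in the cited reference. One small gap to patch: in the case $b=a_{n-p}$, the remark that $u^t$ is non-Lyndon for $t\ge 2$ only rules out Lyndon prefixes whose length is a multiple of $p$; to exclude a Lyndon prefix of the form $u^{q'}v''$ with $v''$ a nonempty proper prefix of $u$, observe that $v''$ would then be both a proper prefix and a proper suffix of that word, forcing $v'' < u^{q'}v''$ and contradicting the Lyndon property. The interior-rotation bookkeeping you flag as the main obstacle goes through cleanly once you use that a Lyndon word is unbordered, so the first disagreement between $u$ and any proper suffix $a_s\cdots a_p$ occurs strictly inside that suffix; if the corresponding position in the rotation happens to land on the appended letter $b$, the inequality $b>a_{r+1}\ge a_t$ still delivers the needed strict comparison.
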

The following corollaries follows immediately from the previous theorem.
\begin{corollary} \label{cor:lyn}
If $\bw = a_1a_2\cdots a_n$ is a prenecklace and  $b > a_n$, then the word
$a_1a_2\cdots a_{n-1}b$ is a Lyndon word.
\end{corollary}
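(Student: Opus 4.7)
The plan is to derive the corollary by two successive applications of the fundamental theorem of necklaces (Theorem~\ref{th:ftn}) to the prefix $\bw' = a_1 a_2 \cdots a_{n-1}$. Since $\bw'$ is a prefix of the prenecklace $\bw$, it too is a prenecklace, so the theorem applies to it. Let $p = \lyn(\bw')$.

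First, I would apply Theorem~\ref{th:ftn} to $\bw'$ extended by $a_n$. The word $\bw' a_n = \bw$ is a prenecklace by hypothesis, so the ``only if'' direction of the theorem forces $a_{n-p} \le a_n$ (here the index $n-p$ is the one the theorem calls out when the prefix under consideration has length $n-1$ and is extended to length $n$). In particular, combining with the hypothesis $b > a_n$, one gets the strict inequality $a_{n-p} < b$.

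Second, I would apply the same theorem to $\bw'$ extended by $b$. The inequality $a_{n-p} < b < k$ (we have $b \in \Sigma$) places us in the second case of the theorem, yielding $\lyn(\bw' b) = n$. But $\bw' b = a_1 a_2 \cdots a_{n-1} b$ has length exactly $n$, and a word whose longest Lyndon prefix coincides with itself is a Lyndon word. Hence $a_1 a_2 \cdots a_{n-1} b$ is Lyndon, proving the corollary.

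There is no real obstacle: everything is a direct bookkeeping application of Theorem~\ref{th:ftn}, and the only mild subtlety is the double use of the theorem (first to extract information about $p$ from the fact that $\bw$ is a prenecklace, then to conclude about the new extension by $b$). One should of course make explicit the trivially used fact that $b \in \Sigma$ so that $b < k$, which is needed to invoke the theorem on the second extension.
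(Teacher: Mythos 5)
Your proof is correct and is essentially the argument the paper intends: the paper offers no written proof, stating only that the corollary "follows immediately" from Theorem~\ref{th:ftn}, and your two-step application of that theorem (first to deduce $a_{n-p}\le a_n$ from $\bw$ being a prenecklace, then to place $b$ in the case $a_{n-p}<b<k$ giving $\lyn(\bw'b)=n$) is the direct way to make that immediacy explicit. The only detail worth a footnote is the trivial case $n=1$, where the theorem's hypothesis $n\ge 2$ does not apply but the conclusion is immediate since any single letter is a Lyndon word.
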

\begin{corollary} \label{cor:neck}
If $\bw = a_1a_2\cdots a_n$ is a necklace and $b > a_1$, then $\bw b$ is a Lyndon word.
\end{corollary}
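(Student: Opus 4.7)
The plan is to apply the Fundamental Theorem of Necklaces (Theorem~\ref{th:ftn}) directly to the prenecklace $\bw = a_1\cdots a_n$ with appended letter $b$. Setting $p = \lyn(\bw)$, the Fundamental Theorem states that $\bw b$ is a Lyndon word of length $n+1$ precisely when $a_{n+1-p} < b < k$. The upper bound on $b$ is automatic, so the proof reduces to establishing $a_{n+1-p} < b$; together with the hypothesis $b > a_1$, it therefore suffices to prove the identity $a_{n+1-p} = a_1$.

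This identity reflects the standard structural fact that a necklace is a power of its longest Lyndon prefix: $p$ divides $n$ and $\bw = u^{n/p}$, where $u = a_1\cdots a_p$. To justify it within the paper's framework, I would track $p_\ell := \lyn(a_1\cdots a_\ell)$ for $\ell = p, p+1, \ldots, n$: by Theorem~\ref{th:ftn}, at each step $p_{\ell+1}$ equals either $p_\ell$ or $\ell+1$, so once $p_\ell$ exceeds $p$ it can never return. Since $p_p = p_n = p$, the sequence $p_\ell$ stays at $p$ throughout, forcing $a_{\ell+1} = a_{\ell+1-p}$ for all $p \le \ell < n$. This $p$-periodicity of $\bw$, combined with its lex-minimality among rotations, rules out a non-integer number of periods and yields $p \mid n$; iterating the periodicity then gives $a_{n+1-p} = a_1$.

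Combining $a_{n+1-p} = a_1$ with $b > a_1$ places us in the strict-inequality case of Theorem~\ref{th:ftn}, so $\lyn(\bw b) = n+1$, i.e., $\bw b$ is a Lyndon word. The only mildly technical piece is the identification $a_{n+1-p} = a_1$; everything else is a one-line invocation of the Fundamental Theorem. Since the paper advertises the corollary as immediate, I expect this periodic-structure observation to be either treated as standard background on necklaces or folded into the argument by a short $p_\ell$-tracking remark along the lines above.
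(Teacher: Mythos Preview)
Your proposal is correct and follows exactly the route the paper indicates: the paper offers no explicit proof, merely stating that the corollary ``follows immediately from the previous theorem,'' and your argument is a faithful unpacking of that claim via Theorem~\ref{th:ftn}. The only substantive step you add is the justification that $a_{n+1-p}=a_1$ (equivalently, that a necklace is a power of its longest Lyndon prefix, so $p\mid n$); this is a standard fact about necklaces that the paper evidently takes for granted, and your $p_\ell$-tracking sketch is a clean way to recover it from Theorem~\ref{th:ftn} itself.
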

The following result also follows from the above theorem and corresponds to
~\cite[Lem.\,2.3]{MR1788836}.
\begin{corollary} \label{cor:pre}
If $\bw = a_1a_2\cdots a_n$ is a prenecklace and $p = \lyn(\bw)<n$,
then $\bw = (a_1a_2\cdots a_p)^j a_1a_2\cdots a_i$
for some $j\geq 1$ and $1 \leq i \leq p$.   
\end{corollary}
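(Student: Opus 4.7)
The plan is to derive the conclusion directly from the Fundamental Theorem of Necklaces (Theorem~\ref{th:ftn}) by showing that $\bw$ is periodic with period $p$, after which the claimed factorization comes from writing $n=jp+i$ and reading off blocks of length $p$.

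First I would show that for every $m$ with $p\le m\le n$, the length-$m$ prefix $\bw_m:=a_1\cdots a_m$ satisfies $\lyn(\bw_m)=p$. The inequality $\lyn(\bw_m)\ge p$ is immediate because $a_1\cdots a_p$ is a Lyndon prefix of $\bw_m$, by the definition of $p=\lyn(\bw)$. Conversely, any Lyndon prefix of $\bw_m$ is a Lyndon prefix of $\bw$, so its length is at most $\lyn(\bw)=p$.

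Next, for each $m$ with $p<m\le n$, I would apply Theorem~\ref{th:ftn} to the prenecklace $\bw_{m-1}$ (of length $m-1$, with $\lyn(\bw_{m-1})=p$ by the previous step) and the letter $b=a_m$. Since $\bw_m=\bw_{m-1}a_m$ is a prenecklace with $\lyn(\bw_m)=p<m$, the alternative ``$\lyn(\bw b)=n$'' in Theorem~\ref{th:ftn} is ruled out, which forces $a_m=a_{m-p}$. Iterating this from $m=p+1$ up to $m=n$ shows that $\bw$ is periodic with period $p$, and therefore has the form $(a_1\cdots a_p)^j a_1\cdots a_i$, where $n=jp+i$ with $j\ge 1$ and $1\le i\le p$ (taking $i=p$ and $j=n/p-1$ in the case $p\mid n$, which is allowed since then $n\ge 2p$).

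The only real subtlety is the indexing in Theorem~\ref{th:ftn}: its ``$n$'' corresponds to the length \emph{after} appending the new letter, and the relevant position is $n-p$ in the word of length $n-1$, so one must keep these offsets straight across all the iterations. One also needs to notice that the very first application (with $m-1=p$) is legal even though $\bw_{m-1}=\bw_p$ is itself a Lyndon word, so that $\lyn(\bw_{m-1})=p=m-1$; the theorem still applies cleanly and produces $a_{p+1}=a_1$, which seeds the periodic pattern. I expect this bookkeeping to be the only place requiring care, and no induction beyond the iterated one-step extension argument is needed.
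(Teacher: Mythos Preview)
Your argument is correct and is exactly the derivation the paper has in mind: the paper does not spell out a proof but simply states that the corollary follows from Theorem~\ref{th:ftn} (and cites \cite[Lem.\,2.3]{MR1788836}), and your iterated application of Theorem~\ref{th:ftn} to successive prefixes, forcing $a_m=a_{m-p}$ for all $p<m\le n$, is precisely how one extracts the periodicity from that theorem. Your bookkeeping on $\lyn(\bw_m)=p$, on the index shift in Theorem~\ref{th:ftn}, and on the $p\mid n$ edge case is all sound.
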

%
\section{Proof of Main Theorems} \label{sec:proofs}

A necklace $\bw = a_1a_2\cdots a_n$ is said to be \defo{stable} if $a_1a_2\cdots a_{n{-}1}$ is a Lyndon word; otherwise $\bw$ is \defo{unstable}.    
We prove Theorem~\ref{th:ineq}, which generalizes Theorem~\ref{th:Pascal}, by partitioning $\bN_k(n_0,n_1, \ldots, n_{k-1})$ into two 
sets $\bS$ and $\bU$, which contain the stable and unstable necklaces of  $\bN_k(n_0,n_1, \ldots, n_{k-1})$, respectively.

\begin{lemma} \label{lem:S}
 Let $k\ge 2$ and $n_0,n_1,\dots,n_{k-1}\ge 1$ be positive integers. Then
\[   |\bS|  = \sum_{i=1}^{k-1} L_k(n_0,\ldots,n_{i-1}, \blue{n_i{-}1},n_{i+1},\ldots,n_{k-1}). \]
\end{lemma}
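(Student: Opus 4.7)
The plan is to construct a bijection between $\bS$ and the disjoint union $\bigsqcup_{i=1}^{k-1} \bL_k(n_0,\ldots,n_i-1,\ldots,n_{k-1})$. Given a stable necklace $\bw = a_1 a_2 \cdots a_n$, send it to its prefix $v(\bw) := a_1 \cdots a_{n-1}$, placed in the component indexed by $i(\bw) := a_n$. By the definition of stability $v(\bw)$ is Lyndon, and its content is obtained from $(n_0,\dots,n_{k-1})$ by subtracting $1$ from the entry indexed by $a_n$, so the map lands in the claimed component provided $a_n \ge 1$.

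The main obstacle is verifying $a_n \neq 0$; this is the only step that uses both the Lyndon property of $v(\bw)$ and the necklace property of $\bw$ simultaneously. Suppose for contradiction that $a_n = 0$. The necklace property gives $\bw \le \sigma^{n-1}(\bw) = 0 \cdot a_1 \cdots a_{n-1}$, and the first-letter comparison forces $a_1 = 0$. Comparing $\bw = 0 \cdot a_2 \cdots a_{n-1} \cdot 0$ with $\sigma^{n-1}(\bw) = 0 \cdot 0 \cdot a_2 \cdots a_{n-1}$ past this common initial $0$ then requires $a_2 \cdots a_{n-1} \cdot 0 \le 0 \cdot a_2 \cdots a_{n-1}$, i.e.\ $\sigma(v(\bw)) \le v(\bw)$. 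This contradicts the defining strict inequality $v(\bw) < \sigma(v(\bw))$ of the Lyndon word $v(\bw)$.

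For the inverse, take a Lyndon word $v \in \bL_k(n_0, \ldots, n_i - 1, \ldots, n_{k-1})$ with $i \in \{1, \ldots, k-1\}$ and set $\bw := v \cdot i$. Its content is $(n_0, \ldots, n_{k-1})$ and it is stable because $v$ is Lyndon. Since $i \ge 1$ the content of $v$ still has $n_0 \ge 1$ copies of $0$, so the first letter of $v$ equals $0 < i$, and Corollary~\ref{cor:neck} guarantees that $v \cdot i$ is a Lyndon word, in particular a necklace. The two constructions are manifest inverses, and the cardinality identity follows.
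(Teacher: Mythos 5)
Your proof is correct and takes essentially the same route as the paper: partition $\bS$ by the last letter, peel it off to get a Lyndon word of the stated content (this is exactly the definition of stable), and invert by appending $i>0$ via Corollary~\ref{cor:neck}; the paper compresses all of this, including the observation that the last letter is non-zero, into two sentences. The only cosmetic caveat is that your contradiction via $v(\bw)<\sigma(v(\bw))$ tacitly assumes $|v(\bw)|\ge 2$, but the excluded case $\bw=00$ cannot occur since $n_1\ge 1$.
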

\begin{proof}
Since each $n_i > 0$, every necklace in $\bS$  must begin with 0 and end with a non-0.  By further partitioning $\bS$ by
its last symbol, the result follows from Corollary~\ref{cor:neck} and the definition of stable.
\end{proof}

It remains to show that $|\bU| \leq L_k(\blue{n_0{-}1},n_1, n_2, \ldots ,n_{k-1})$.   We assume $k \geq 2$ and each $n_i \geq 1$.
Let $\bw = a_1a_2\cdots a_n$ be a necklace in $\bU$.  Let $\bw' = a_1a_2\cdots a_{n-1}$ and let $x = a_n$.  Since $\bw$ is unstable, 
$\bw'$ is a prenecklace, but not a Lyndon word.  Thus, applying Corollary~\ref{cor:pre}, we can write $\bw'$ as
$(a_1a_2\cdots a_p)^j a_1a_2\cdots a_i$ where $j \geq 1$ and $1 \leq i \leq p$.   Let $z$ be the largest index less than or equal to $i$ such that $a_z = 0$.  
Thus 
\[ \bw = (a_1a_2\cdots a_p)^j a_1a_2\cdots a_{z-1} \red{a_z} a_{z+1} \blue{a_{z+2}\cdots a_i x}.\] 
Consider the   function $f:  \bU \rightarrow \bL_k(n_0{-}1,n_1, n_2, \ldots ,n_{k-1})$ as follows:
\begin{center}
$f(\bw) = \left\{ \begin{array}{ll}
        (a_1a_2\cdots a_p)^j  a_1a_2\cdots a_{z-1} \blue{x}\  &\ \  \mbox{if  $z=i$;} \\
       (a_1a_2\cdots a_p)^j   \blue{x a_i a_{i-1} \cdots a_{z+2}} a_1a_2\cdots a_{z-1} a_{z+1}  &\ \  \mbox{if  $z<i$.}
         \end{array} \right.$
\end{center}
Clearly $f(\bw)$  has the required content.  
To see that $f(\bw)$ is a Lyndon word, observe first that $(a_1a_2\cdots a_p)^j$
is a necklace beginning with 0.  Since each symbol in
$\blue{x a_i a_{i-1} \cdots a_{z+2}}$ is non-0,
$\beta=(a_1a_2\cdots a_p)^j   \blue{x a_i a_{i-1} \cdots a_{z+2}}$
is a Lyndon word by Corollary~\ref{cor:neck}.
Both words $(a_1a_2\cdots a_p)^j  a_1a_2\cdots \red{a_{z}}$ and 
$(a_1a_2\cdots a_p)^j \blue{x a_i a_{i-1} \cdots a_{z+2}} a_1a_2\cdots
\red{a_{z}}$  are prenecklaces, since they are beginnings of
the necklaces $(a_1\cdots a_p)^{j+2}$ and $\beta^2$, respectively.
Thus since  $x, a_{z+1} > 0$ and $a_z = 0$ Corollary~\ref{cor:lyn}  implies $f(\bw)$ is a Lyndon word.

\begin{lemma}  \label{lem:U}
 Let $k\ge 2$ and $n_0,n_1,\dots,n_{k-1}$ be positive integers. Then
\[  |\bU| \leq L_k(\blue{n_0{-}1},n_1, n_2, \ldots ,n_{k-1}). \]
\end{lemma}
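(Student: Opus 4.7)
The plan is to establish that $f$ is injective, which together with $f(\bU) \subseteq \bL_k(n_0{-}1,n_1,\ldots,n_{k-1})$ (verified above) gives $|\bU| = |f(\bU)| \leq L_k(n_0{-}1,n_1,\ldots,n_{k-1})$.

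To prove injectivity, I describe how to recover $\bw$ uniquely from $\bv := f(\bw)$. Write $\bv = b_1 b_2 \cdots b_m$, where $m = jp + i$. The two cases of the construction of $f$ can be distinguished structurally via the penultimate character $a_{n-1}$ of $\bw$: in case 1 ($z = i$) we have $a_{n-1} = a_i = 0$, while in case 2 ($z < i$) we have $a_{n-1} = a_i \neq 0$. In case 1 the reconstruction is entirely canonical and depends only on $\bv$, namely $\bw = b_1 b_2 \cdots b_{m-1} \cdot 0 \cdot b_m$, obtained by inserting a single $0$ just before the last character of $\bv$; notably this does not depend on the parameters $(p,j,i,z)$. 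In case 2 the reconstruction requires knowledge of $(p,j,i,z)$, and these are extracted from $\bv$ using the observation that $b_1 \cdots b_{jp+1} = (a_1\cdots a_p)^j x$ must itself be a Lyndon prefix of $\bv$ (Corollary~\ref{cor:lyn} applies since $x > 0 = a_1$), together with the constraints $1 \leq i = m - jp \leq p$ and $a_z = 0$, which fix the remaining parameters.

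Given $\bv$, I compute the case-1 candidate and all case-2 candidates, and verify which one lies in $\bU$. The main obstacle is to show that at most one such candidate is an unstable necklace. In case 1 the reconstruction is forced, so any ambiguity must come from choosing between case 1 and case 2, or from different case-2 parameter settings (which correspond to distinct admissible Lyndon prefix lengths of $\bv$). Ruling all but one out relies on the primitivity of Lyndon words---invoked via the Fine--Wilf theorem applied to periods of the relevant prefixes of $\bv$---combined with the constraint that $\bw$ is a necklace (not merely a prenecklace). I expect this uniqueness argument to be the technically most delicate part of the proof.
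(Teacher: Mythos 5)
Your overall strategy --- proving that $f$ is injective so that $|\bU| = |f(\bU)| \le L_k(n_0{-}1,n_1,\ldots,n_{k-1})$ --- is exactly the paper's, but your proposal stops short of the one step that constitutes the actual content of the lemma. You reduce injectivity to the claim that, among the case-1 candidate and all case-2 candidates reconstructible from a given image word, at most one is an unstable necklace, and then you write that you ``expect this uniqueness argument to be the technically most delicate part of the proof,'' gesturing at Fine--Wilf and primitivity without carrying the argument out. That uniqueness claim \emph{is} the proof. The paper establishes it directly: take two unstable necklaces $\bw$ and $\bv$ with $f(\bw)=f(\bv)$; if the periodic prefixes have equal length $jp=j'p'$ the words coincide; otherwise (say $jp<j'p'$) one derives a contradiction in two subcases governed by the length $\ell$ of the reversed tail --- either a letter of the tail forces $\bv$ to end in a proper prefix of its Lyndon root $b_1\cdots b_{p'}$ (contradicting that $\bv$ is a necklace), or a suffix of $(b_1\cdots b_{p'})^{j'}$ would have to be a prefix of $b_1\cdots b_{p'}$ (contradicting that $b_1\cdots b_{p'}$ is a Lyndon word). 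None of this appears in your write-up, so the proposal as it stands does not prove the lemma.

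A secondary problem: your proposed criterion for deciding which case of $f$ was applied --- inspecting the penultimate character $a_{n-1}$ of $\bw$ --- is circular, since $\bw$ is precisely the object you are trying to reconstruct from its image. A given word could a priori arise as a case-1 image of one necklace and a case-2 image of another, and ruling this out is again part of the missing uniqueness argument. Likewise, a word may have several Lyndon prefixes, so ``extracting $(p,j,i,z)$'' from the image is not well defined until the uniqueness of the admissible decomposition has been proved.
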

\begin{proof}
We prove that $f$ is one-to-one.  Consider two necklaces $\bw=a_1a_2\cdots a_n$ and $\bv = b_1b_2\cdots b_n$ in $\bU$.
As discussed when defining $f$, we can write $\bw$ and $\bv$ as:
\[ \bw = (a_1a_2\cdots a_p)^j a_1a_2\cdots a_{z-1} \red{a_z} a_{z+1} \blue{a_{z+2}\cdots a_i x},\] 
\[ \bv = (b_1b_2\cdots b_{p'})^{j'} b_1b_2\cdots b_{z'-1} \red{b_{z'}} b_{z'+1} \blue{b_{z'+2}\cdots b_{i'} x'}.\]
%
Suppose $f(\bw) = f(\bv)$.   If $jp = j'p'$, then clearly $p=p'$ and $\bw = \bv$.  Otherwise, 
without loss of generality assume that $jp < j'p'$.  Then $p'>jp$ and
$a_1a_2\cdots a_p = b_1b_2\cdots b_p$. 
Consider two cases based on $\ell = |a_{z+2} \cdots a_ix|$.  
\begin{itemize}
\item Case  $jp + \ell \geq j'p'$.  This implies $\ell > 0$. Let  $\ell' =
	|b_{z'+1} \cdots b_{i'}x'|$.  Recall $a_1 = b_1 = 0$ and each letter of
	$a_{z+1} \cdots a_ix$ and $b_{z'+1} \cdots b_{i'}x'$ is non-0.  Thus, since
	$f(\bw) = f(\bv)$,
we must have $jp + \ell  =  j'p' + \ell'$. But this means  $x' = a_{i'+1} = b_{i'+1}$, which means that $\bv$ ends with $b_1b_2\cdots b_{i'+1}$ which is a proper prefix of $b_1b_2\cdots b_{p'}$
contradicting $\bv$ being a necklace.
\item Case  $jp + \ell < j'p'$.  In this case,   for $f(\bw) = f(\bv)$, it must be that some suffix of $(b_1b_2\cdots b_{p'})^{j'} $ is a prefix of $a_1a_2\cdots a_{z-1} = b_1b_2\cdots b_{z-1}$ which contradicts $b_1b_2\cdots b_{p'}$ being a Lyndon word.
\end{itemize}

\end{proof}

%
Together Lemma~\ref{lem:S} and Lemma~\ref{lem:U} prove Theorem~\ref{th:Pascal}. To complete the proof of Theorem~\ref{th:ineq}, 
the following lemma proves that for $k>2$ the function $f$ is not a bijection.

\begin{lemma}
The function $f$ is not a surjection when $k  > 2$.
\end{lemma}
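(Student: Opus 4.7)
Plan.  I aim to exhibit, for every admissible content with $k\ge 3$, an explicit Lyndon word $\gamma^\ast\in\bL_k(n_0{-}1,n_1,\ldots,n_{k-1})$ that is not in the image of $f$, thereby proving the strict inequality $|\bU|<|\bL_k(n_0{-}1,n_1,\ldots,n_{k-1})|$.

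I first handle the degenerate case $n_0=1$.  Any necklace $\bw$ with content $(1,n_1,\ldots,n_{k-1})$ has its unique $0$ at position~$1$, so $\bw'=\bw[1\mathbin{..}n{-}1]$ also contains exactly one $0$, at position~$1$.  Any nontrivial period of $\bw'$ would force a second $0$, so $\bw'$ is primitive; and $\bw'$ is the unique rotation of its conjugacy class beginning with $0$, hence the strict lex-minimum, i.e., a Lyndon word.  Thus every necklace is stable and $\bU=\emptyset$.  Meanwhile $\bL_k(0,n_1,\ldots,n_{k-1})$ is nonempty---it contains for example $1^{n_1}2^{n_2}\cdots(k{-}1)^{n_{k-1}}$, which is primitive because its block structure cannot repeat and which is clearly the lex-minimum of its rotations---so any such Lyndon word is a witness.

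For the main case $n_0\ge 2$ I take $\gamma^\ast$ to be the lexicographically greatest Lyndon word in $\bL_k(n_0{-}1,n_1,\ldots,n_{k-1})$.  Suppose for contradiction $\gamma^\ast=f(\bw)$ for some $\bw\in\bU$.  In Case~1 of the definition of $f$, $\bw$ is obtained from $\gamma^\ast$ by inserting a $0$ immediately before the last character.  Comparing $\bw$ with the rotation starting at the inserted $0$ forces $\gamma^\ast[2]\le\gamma^\ast[n{-}1]$; comparing $\bw$ with rotations beginning at other zeros of $\gamma^\ast$ forces $\gamma^\ast[m{+}1]\ge\gamma^\ast[2]$ whenever $\gamma^\ast[m]=0$ and $m\le n{-}3$, and if $\gamma^\ast[n{-}2]=0$ the further requirement $\gamma^\ast[2]=0$ together with $\gamma^\ast[3]\le\gamma^\ast[n{-}1]$.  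In each resulting subcase one explicitly constructs a Lyndon word of the same content that is lex-greater than $\gamma^\ast$---either by permuting characters in its tail into a more ``decreasing'' arrangement compatible with the Lyndon property, or by shortening a leading run of zeros---contradicting the maximality of $\gamma^\ast$.  Case~2 is handled by the analogous strategy applied to the reversed-tail inverse of $f$, with the necklace comparisons now tracking the rearranged non-zero suffix via the fundamental theorem of necklaces (Theorem~\ref{thm:fund}).

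The main obstacle is the uniform construction of the lex-greater Lyndon word in each subcase.  In Case~1 this requires a careful comparison of the sorted tail of $\gamma^\ast$ with admissible modifications; in Case~2 the bookkeeping is more involved because the inverse of $f$ reverses a portion of the non-zero suffix before inserting the $0$, and one must ensure the candidate $\gamma'$ keeps the correct content and remains primitive.  A secondary subtlety is the subcase $\gamma^\ast[2]=0$, which arises when lex-maximality forces $\gamma^\ast$ to begin with multiple zeros; here the lex-greater witness must be built by modifying the first non-zero block of $\gamma^\ast$ rather than permuting its tail.
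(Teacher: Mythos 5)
Your treatment of $n_0=1$ is correct and coincides with the paper's first case ($\bU=\emptyset$ while $\bL_k(0,n_1,\ldots,n_{k-1})\neq\emptyset$). But for $n_0\ge 2$ what you give is a plan, not a proof: the entire burden of the argument is carried by the sentence ``in each resulting subcase one explicitly constructs a Lyndon word of the same content that is lex-greater than $\gamma^\ast$,'' and that construction is never exhibited. The necessary conditions you extract from the rotation comparisons (e.g.\ $\gamma^\ast[2]\le\gamma^\ast[n-1]$, and $\gamma^\ast[m+1]\ge\gamma^\ast[2]$ at each zero) are correct but are only the setup; you yourself identify the missing construction as ``the main obstacle,'' and Case~2 of $f$ is dismissed with ``the analogous strategy.'' As written, the lemma is not proved.

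There is also a structural reason to doubt that the sketch can be completed as stated: nowhere in your $n_0\ge2$ argument does the hypothesis $k>2$ appear, yet the conclusion is false for $k=2$. For instance, with content $(3,3)$ the map $f$ on the unstable necklaces of $\bN_2(3,3)$ is a bijection onto $\bL_2(2,3)$, and the lexicographically greatest word of that set, $01011$, equals $f(010101)$. So ``the lex-greatest Lyndon word is never in the image of $f$'' is simply not true in general, and any correct completion of your subcase analysis must invoke the existence of a third letter at a specific point --- which your outline never does. The paper avoids all of this by producing, for each of the four cases $n_0=1$, $n_0=2$, $n_0=2j{+}1$, $n_0=2j$, a single explicit witness word (such as $0(k{-}1)^{n_{k-1}}\cdots 2^{n_2}1^{n_1}$ for $n_0=2$) whose exclusion from the image follows from a one-line prefix/subword contradiction, and each witness visibly uses both the letter $1$ and the letter $k-1\ge 2$. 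You would need either to supply the lex-greater Lyndon word in every subcase, with an explicit use of $k\ge3$, or to switch to explicit witnesses as the paper does.
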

\begin{proof}
First we consider the special cases of $n_0 = 1$ or $n_0 = 2$.  Then we consider the parity of $n_0$.  For the latter three cases we demonstrate a word
in $\bL_k(n_0{-1}, n_1, n_2,\ldots,n_{k-1})$ that is not in the range of $f$.
\begin{itemize}
\item Case: $n_0 = 1$.   $\bU$ is empty, but $\bL_k(n_0{-1}, n_1, n_2,\ldots,n_{k-1})$ is not. 
\item Case: $n_0 = 2$.   There is no necklace in $\bU$ that maps to
	the word $\alpha=0 (k{-}1)^{n_{k-1}} \cdots  2^{n_2} 1^{n_1}$. Indeed,
any necklace $\gamma $ with $f(\gamma)=\alpha $
has to start with $0(k{-}1)$, since $\alpha $ starts with
$0(k-1)$. Moreover, $\gamma $ has the subword $01$ since $\alpha $ ends with $1$.
\item Case:  $n_0 = 2j+1$ for $j \geq 1$. There is no necklace in $\bU$ that maps to $0^{j} 1^{n_1}  2^{n_2} \cdots (k{-}1)^{n_{k-1}-1} 0^{j} (k{-}1)$ because
such a necklace would have to start with $0^j1$ but have the subword $0^{j+1}$.
\item Case: $n_0 = 2j$ for $j \geq 2$.  There is no necklace in $\bU$ that maps to $0^{j} (k{-}1)^{n_{k-1}} \cdots  3^{n_3}2^{n_2} 1^{n_1-1} 0^{j-1}1$  because
such a necklace would have to start with $0^{j} (k{-}1)$ but have the subword  $0^j1$.
\end{itemize}
\end{proof}

\section{Special Cases When $N(n,d) = L(n{-}1,d) + L(n{-}1,d{-}1)$} \label{sec:special}

      In this section we discuss when the inequality given by Theorem~\ref{th:Pascal} is equality.   
      
      \begin{lemma} 
      If $d \in \{1,2,n{-}2,n{-}1\}$  and $0 < d < n$ then $N(n,d) = L(n{-}1,d) + L(n{-}1,d{-}1)$ except for $(n,d) = (2,1)$.      
      \end{lemma}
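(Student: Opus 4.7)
The plan is to use the symmetry $d \mapsto n-d$ to reduce to the two cases $d = 1$ and $d = 2$, and then verify each by direct enumeration.

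The symmetry reduction is straightforward: $N(n,d) = N(n,n-d)$ is stated in the introduction, and applying $L(n-1,e) = L(n-1,(n-1)-e)$ to each of the two summands on the right gives
\[ L(n-1,d) + L(n-1,d-1) \;=\; L(n-1,n-d-1) + L(n-1,n-d), \]
so the asserted identity for parameter $d$ is equivalent to the one for parameter $n-d$. Hence it suffices to handle $d = 1$ and $d = 2$.

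For $d = 1$ with $n \ge 3$, both sides equal $1$: the only necklace is $0^{n-1}1$, the only Lyndon word of length $n-1$ and density $1$ is $0^{n-2}1$, and $L(n-1,0) = 0$ because $0^{n-1}$ is periodic once $n-1 \ge 2$. The excluded point $(n,d) = (2,1)$ is precisely the case where the degenerate values $L(1,0) = L(1,1) = 1$ jointly inflate the RHS to $2$, while $N(2,1) = 1$.

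For $d = 2$ (which forces $n \ge 3$), I would parametrize every necklace of length $n$ and density $2$ uniquely as $0^a 1 0^b 1$ with $a \ge b \ge 0$ and $a + b = n - 2$, giving $N(n,2) = \lfloor n/2 \rfloor$; similarly the Lyndon words of length $n-1$ and density $2$ are exactly $0^a 1 0^b 1$ with $a > b \ge 0$ and $a + b = n - 3$ (the strict inequality being exactly the aperiodicity and lex-minimality condition), giving $L(n-1,2) = \lfloor (n-2)/2 \rfloor$. Combined with $L(n-1,1) = 1$, the RHS then evaluates to $\lfloor (n-2)/2 \rfloor + 1 = \lfloor n/2 \rfloor$, matching the LHS. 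No real obstacle arises; after the symmetry reduction the argument is essentially pure enumeration, with the only care needed being a parity case-split when evaluating the floors and the bookkeeping that isolates the excluded edge case $(n,d) = (2,1)$.
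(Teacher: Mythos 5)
Your proposal is correct and follows essentially the same route as the paper: reduce to $d=1$ and $d=2$ via the symmetries $N(n,d)=N(n,n{-}d)$ and $L(n,d)=L(n,n{-}d)$, then compute both sides directly, arriving at the same counts $N(n,2)=\lfloor n/2\rfloor$, $L(n{-}1,2)=\lfloor (n{-}2)/2\rfloor$, $L(n{-}1,1)=1$. Your explicit parametrization of the density-$2$ words and the explanation of why $(2,1)$ fails are slightly more detailed than the paper's, but the argument is the same.
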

      \begin{proof}
      Recall that $N(n,d) = N(n,n{-}d)$ and $L(n,d) = L(n,n{-}d)$.  Thus it suffices to prove the result for $d=1$ and $d=2$.
      For $d=1$ and $n > 2$, $\bN(n,1) = \{0^{n-1}1\}$,  $\bL(n{-}1,1) = \{0^{n-2}1\}$,  and $\bL(n{-}1,0) = \emptyset$, and thus the result holds.
      For $d=2$, note that $N(n,2)  = \lfloor \frac{n}{2} \rfloor$, $L(n{-}1,2)  = \lfloor \frac{n-2}{2} \rfloor$, and $L(n{-}1,1) = 1$, and thus the result holds.
       \end{proof}

      We now consider the other values of $d$.   
            
      \begin{lemma}
      If $2 <  d < n-2$  then $N(n,d) = L(n{-}1,d) + L(n{-}1,d{-}1)$ if and only if $(n,d) \in \{ (6,3), (7,3), (7,4), (8,4), (9,3), (9,6) \}$. 
      \end{lemma}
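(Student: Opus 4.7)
By Lemmas~\ref{lem:S} and~\ref{lem:U}, $N(n,d)=L(n{-}1,d{-}1)+|\bU|$ with $|\bU|\le L(n{-}1,d)$ via the injection $f\colon\bU\to\bL(n{-}1,d)$, so the asserted equality is equivalent to $f$ being surjective onto $\bL(n{-}1,d)$. My plan is therefore to verify surjectivity of $f$ for each of the six listed pairs and, for every other $(n,d)$ with $2<d<n-2$, to exhibit a Lyndon word in $\bL(n{-}1,d)$ that lies outside the image of $f$.

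For every $(n,d)$ with $n\le 9$, both sides can be computed directly from the explicit formulas for $N_2$ and $L_2$ in the Introduction; this is routine. The computation verifies all six equality cases and shows that the only remaining pairs with $n\le 9$, namely $(8,3),(8,5),(9,4),(9,5)$, satisfy strict inequality: in each of these the right-hand side exceeds $N(n,d)$ by exactly one.

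For $n\ge 10$, I would exhibit, for each $(n,d)$, an explicit Lyndon word in $\bL(n{-}1,d)$ not in the image of $f$. By the symmetries $N(n,d)=N(n,n-d)$ and $L(n-1,d)=L(n-1,n-1-d)$ it suffices to treat $d\le n/2$. The prototype is the case $(8,3)$: the Lyndon word $0010101\in\bL(7,3)$ admits exactly one valid decomposition $\ell^j 1^s (a_1\cdots a_q)1$ (namely $\ell=00101$, $j=1$, $s=0$, $q=1$), and the corresponding candidate preimage $00101001$ is not a necklace because its rotation $00100101$ is strictly smaller. I would generalise this by choosing Lyndon words whose periodic structure forces every candidate preimage $\ell^j a_1\cdots a_q 0\cdot 1^{s+1}$ to admit a strictly smaller cyclic rotation, and hence to fail the necklace condition.

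The main obstacle is producing such non-image words uniformly over the range $n\ge 10$, $3\le d\le n/2$. A single family such as $0(01)^b$ only covers pairs along a one-dimensional curve in $(n,d)$-space (there, $n=2d+2$), so several tailored families, parametrised for instance by the residue of $n$ modulo $d$, are likely needed. Moreover, for each candidate word, non-membership in the image is itself a finite case analysis: enumerate all Lyndon prefixes of the word, check which yield valid decompositions, and verify that the corresponding candidate necklace fails in every case. I expect this combinatorial check, rather than the initial reduction to surjectivity, to form the bulk of the proof.
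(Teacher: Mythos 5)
Your opening reduction is correct and is exactly the paper's: by Lemma~\ref{lem:S} and the injectivity of $f$ from Lemma~\ref{lem:U}, $N(n,d)=L(n{-}1,d{-}1)+|\bU|$, so equality holds if and only if $f$ maps $\bU$ onto $\bL(n{-}1,d)$. The finite verification for small $n$ and the identification of the four strict-inequality pairs $(8,3),(8,5),(9,4),(9,5)$ below the threshold are also fine. But from that point on you have a plan rather than a proof: the entire content of the ``only if'' direction for large $n$ is the construction, for \emph{every} pair with $n$ large and $2<d<n-2$, of an explicit Lyndon word outside the image of $f$, together with the verification that it is outside the image. You supply one prototype, for $(8,3)$, and then state that ``several tailored families \ldots are likely needed'' and that this ``forms the bulk of the proof.'' That bulk is missing, so the gap is genuine.

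It is worth noting where the paper finds the leverage you are looking for. You propose to parametrise the families by the residue of $n$ modulo $d$; the paper instead uses the symmetry to reduce to the case where the number of zeros $z=n-d$ satisfies $2<z\le n/2$, and then splits on $z$ rather than on $d$: the cases $z=3$ and $z=4$ are rigid enough that every necklace has the form $01^{a_1}01^{a_2}01^{a_3}$ (resp.\ $01^{a_1}\cdots 01^{a_4}$), so the image of $f$ can be described completely and a non-image word ($01^a01^{a+1}$ or $01^a01^{a+2}$ for $z=3$, and the analogous triples for $z=4$) read off directly; and for all $z\ge 5$ a \emph{single} two-parameter family $0011^{a}01(01)^{b}$ suffices. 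Parametrising by the number of zeros is what collapses the ``uniformity over all $(n,d)$'' problem you correctly identify as the main obstacle into three manageable cases; parametrising by $n\bmod d$ gives no such collapse, and there is no evidence in your proposal that it would terminate in finitely many families. Until the witness words are exhibited and checked for all large $(n,d)$, the lemma is not proved.
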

      \begin{proof}
      The claim can easily be verified by direct computation for $n \leq 10$.  Recall that $N(n,d) = N(n,n{-}d)$ and $L(n,d) = L(n,n{-}d)$.   For simplicity, let $z = n{-}d$ (the number of 0s).
      We consider $n>10$ and $2<  z \leq n/2$ in three cases for $z=3$, $z=4$, and $z \geq 5$. 
      Each result is proved by specifying a Lyndon word $\bv \in \bL(n{-}1,d) = \bL_2(z{-}1,d)$ not in the range of $f$ when the domain is the set of unstable necklaces in $\bN(n,d) =  \bN_2(z,d)$.

      \begin{itemize}
      
\item Case: $z=3$.  Depending on the parity of $n$ let  $\bv$ be either  $01^a01^{a+1}$  or  $01^a01^{a+2}$.  Since $n > 10$, $a \geq 3$.
Consider any necklace $\bw$ in $\bN_2(3,d)$.  It must be of the form $01^{a_1}01^{a_2}01^{a_3}$ for non-negative integers $a_1,a_2,a_3$ with $a_2, a_3 \geq a_1$.
But for any such (unstable) $\bw$,  $f(\bw) = 01^{a_1}01^{a_2+a_3} \neq \bv$.

\item Case: $z=4$.  Depending on the value of $(n \bmod 3)$ let  $\bv$ be one of
	the words $01^{a}01^a01^{a+1}$, $01^{a}01^{a+1}01^{a+1}$, and
	$01^{a}01^{a+1}01^{a+2}$.  Since $n > 10$, $a \geq 2$.
Consider any necklace $\bw$ in $\bN_2(4,d)$.  It must be of the form
$01^{a_1}01^{a_2}01^{a_3}01^{a_4}$ for non-negative integers $a_1,a_2,a_3,a_4$
with $a_2, a_3, a_4 \geq a_1 \geq 0$. 
But for any such (unstable) $\bw$, it is not difficult to see that either
$a_1=a_3<a_2$, $f(\bw) = 01^{a_1}0^{a_2+a_4-1}01^{a_3+1}$,
or $f(\bw) =  01^{a_1}01^{a_2}01^{a_3+a_4}$.  Neither case is equal to $\bv$.

\item Case:  $z \geq 5$.  Consider $\bv = 0011^{a}01(01)^{b}$.  For any $n> 10$ and $z \leq n/2$, such Lyndon word can be constructed with length $n$ and exactly $z{-}1$ 0s for some $a, b \geq 1$.
But for any necklace $\bw$ in $\bN_2(z,d)$, by the definition of $f$, it is not difficult to observe that $f(\bw) \ne \bv$. 

\end{itemize}
      
      \end{proof}


\end{document}